\tikzstyle{vertex}=[circle,draw=black,fill=black,inner sep=0,minimum size=3pt,text=white,font=\footnotesize]
\date{}
\title{\vspace{-1.2cm} Matrix discrepancy and the log-rank conjecture}
\author{ Benny Sudakov\thanks{ETH Zurich, \emph{e-mail}: \textbf{benjamin.sudakov@math.ethz.ch}.
Research supported in part by SNSF grant 200021\_196965.}, 
	Istv\'an Tomon\thanks{Ume\r{a} University, \emph{e-mail}: \textbf{istvan.tomon@umu.se}.}}
\theoremstyle{plain}
\newtheorem{theorem}{Theorem}[section]
\newtheorem{corollary}[theorem]{Corollary}
\newtheorem{claim}[theorem]{Claim}
\newtheorem{lemma}[theorem]{Lemma}
\Crefname{theorem}{Theorem}{Theorems}
\Crefname{definition}{Definition}{Definitions}
\Crefname{corollary}{Corollary}{Corollaries}
\Crefname{claim}{Claim}{Claims}
\Crefname{lemma}{Lemma}{Lemmas}
\Crefname{conjecture}{Conjecture}{Conjectures}
\Crefname{problem}{Problem}{Problems}
\Crefname{prop}{Proposition}{Propositions}
\theoremstyle{definition}
\DeclareMathOperator{\rank}{rank}
\DeclareMathOperator{\polylog}{polylog}
\DeclareMathOperator{\disc}{disc}
\DeclareMathOperator{\pdisc}{pdisc}
\begin{document}
	
	\maketitle
	\sloppy

 \begin{abstract}
     Given an $m\times n$ binary matrix $M$ with $|M|=p\cdot mn$ (where $|M|$ denotes the number of 1 entries), define the \emph{discrepancy} of $M$ as $\disc(M)=\displaystyle\max_{X\subset [m], Y\subset [n]}\big||M[X\times Y]|-p|X|\cdot |Y|\big|$.
     Using semidefinite programming and spectral techniques, we prove that if $\rank(M)\leq r$ and $p\leq 1/2$, then
      $$\disc(M)\geq \Omega(mn)\cdot \min\left\{p,\frac{p^{1/2}}{\sqrt{r}}\right\}.$$
    We use this result to obtain a modest improvement of Lovett's best known upper bound on the log-rank conjecture.
    We prove that any $m\times n$ binary matrix $M$ of rank at most $r$ contains an $(m\cdot 2^{-O(\sqrt{r})})\times (n\cdot 2^{-O(\sqrt{r})})$ sized all-1 or all-0 submatrix, which implies that the deterministic communication complexity of any Boolean function of rank $r$ is at most $O(\sqrt{r})$.
 \end{abstract}

\section{Introduction}

The log-rank conjecture, proposed by Lov\'asz and Saks \cite{logrank} in 1988, is one of the fundamental open problems in communication complexity. It states that for any Boolean function $f:X\times Y\rightarrow \{-1,1\}$ of rank $r$, its deterministic communication complexity $\mbox{CC}^{det}(f)$ is bounded by $\polylog(r)$. We refer the reader to Kushilevitz and Nisan \cite{KN} for exact definitions, or the recent survey of Lee and Shraibman \cite{LS} for a detailed overview of the problem. The log-rank conjecture has a number of combinatorial interpretations, which will be the main focus of our paper. 

Lov\'asz and Saks \cite{logrank}  observed  (see also \cite{Lovett}) that this problem is closely related to bounding the chromatic number of graphs, whose adjacency matrix has bounded rank. Indeed, if $c(r)$ denotes the maximum of $\mbox{CC}^{det}(f)$ among every $f$ of rank $r$, then $\chi(G)\leq 2^{c(r+1)}$ for every graph $G$, whose adjacency matrix has rank at most $r$. Furthermore, the log-rank conjecture is related to finding monochromatic submatrices in low-rank binary matrices. To this end, let $\alpha(r)>0$ be such that any binary matrix $M\in \{0,1\}^{m\times n}$ of rank at most $r$ contains an all-0 or all-1 submatrix of area at least $mn/2^{\alpha(r)}$ (here, the \emph{area} of a matrix refers to the product of the number of rows and columns). It was proved by Nisan and Wigderson \cite{NW} that $c(f)=O((\log r)^2+\sum_{i=0}^{\log_2 r}\alpha(r/2^i))$.

The best known lower bound $c(r)=\Tilde{\Omega}((\log r)^2)$ is due to G\"o\"os, Pitassi, and Watson \cite{GPW}. On the other hand, the best known upper bound was due to Lovett \cite{Lovett}, who showed that $\alpha(r)\leq O(\sqrt{r}\cdot \log r)$, and thus $c(r)=O(\sqrt{r}\cdot\log r)$. Here, we provide the modest improvement that $c(r)=O(\sqrt{r})$ by establishing the following improvement on $\alpha(r)$.

\begin{theorem}\label{thm:logrank}
    There exists a constant $c>0$ such that the following holds. Let $M\in \{0,1\}^{m\times n}$ such that $\rank(M)\leq r$. Then there exists $X\subset [m]$ and $Y\subset [n]$ such that 
    $$|X|\geq \frac{m}{2^{c\sqrt{r}}}\mbox{ and }|Y|\geq \frac{n}{2^{c\sqrt{r}}},$$
    and $M[X\times Y]$  contains only 0 or only 1 entries.
\end{theorem}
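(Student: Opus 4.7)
The plan is iterative refinement using the discrepancy bound from the abstract. Without loss of generality assume $p := |M|/(mn) \le 1/2$; otherwise pass to $J-M$, whose rank is at most $r+1$. I construct a nested chain $M = M_0 \supseteq M_1 \supseteq \cdots \supseteq M_T$ of sub-rectangles with strictly decreasing $1$-density $p_t = |M_t|/(m_tn_t)$, terminating when $M_T$ is all-$0$.

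At step $t$, since $\rank(M_t) \le r$, the discrepancy bound yields $X \subseteq [m_t]$, $Y \subseteq [n_t]$ with
\[
\bigl||M_t[X \times Y]| - p_t|X|\,|Y|\bigr| \;\ge\; c\, m_t n_t \cdot \min\{p_t,\, p_t^{1/2}/\sqrt{r}\}.
\]
The four rectangles $X \times Y$, $X^c \times Y$, $X \times Y^c$, $X^c \times Y^c$ partition $[m_t] \times [n_t]$, so by averaging one of them, call it $M_{t+1}$, has $1$-density at most $p_t - c'\min\{p_t,\, p_t^{1/2}/\sqrt{r}\}$ and area at least $\alpha\, m_tn_t$ for a positive absolute constant $\alpha$.

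To analyze the density sequence, set $q_t := p_t^{1/2}$. In the regime $p_t \ge 1/r$ the step inequality reads $q_{t+1} \le q_t - \Omega(1/\sqrt{r})$, so $p_t$ falls below $1/r$ in $O(\sqrt{r})$ iterations; combined with a constant area loss per step, this phase alone contracts the area by only $2^{-O(\sqrt{r})}$, matching the target. In the regime $p_t < 1/r$ the density drops by a constant factor per step, and so does the number of $1$s $N_t := p_t m_t n_t$.

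The main obstacle is this second regime. Naively driving $N_t$ to $0$ can take $\Theta(\log mn)$ further steps, whose cumulative area loss $2^{-\Theta(\log mn)}$ would overwhelm the budget. The resolution exploits that when $p_t < 1/r$ the bound $D_t \ge c\, p_t m_t n_t$ combined with $|M_t[X \times Y]| \ge 0$ forces the low-density witness to satisfy $|X||Y| \ge c\, m_tn_t$, automatically capping per-step area loss and permitting a tunable tradeoff: smaller witnesses sharpen the density drop (even down to $0$) at the cost of more area. Alternatively, I will terminate the iteration once $N_t \le 2^{c''\sqrt{r}}$ and finish with a random-sampling endgame: a uniform $X' \subseteq [m_t]$, $Y' \subseteq [n_t]$ of sizes $m_t/2^{O(\sqrt{r})}$ and $n_t/2^{O(\sqrt{r})}$ has $\mathbb{E}|M_t[X' \times Y']| < 1$, yielding an all-$0$ submatrix of the required dimensions. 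The technical heart is thus verifying that the averaging step really extracts a constant-fraction rectangle with the stated density drop, and showing that the threshold $N_t \le 2^{c''\sqrt{r}}$ is reached within $O(\sqrt{r})$ total iterations --- the crucial point being that the regime-2 iteration count is controlled independently of $mn$ via the tunable tradeoff above.
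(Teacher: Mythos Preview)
Your Phase-1 iteration (while $p_t \ge 1/r$) is essentially the paper's approach and is correct: the substitution $q_t=\sqrt{p_t}$ turns the density drop $\Omega(p_t^{1/2}/\sqrt r)$ into $q_{t+1}\le q_t-\Omega(1/\sqrt r)$, so this phase ends in $O(\sqrt r)$ steps. (Your ``averaging over four quadrants'' needs a small patch --- the discrepancy witness $(X,Y)$ could a priori be a single row --- but the standard half-by-half refinement, the paper's Claim~2.2, fixes this.) The real gap is in Phase 2. Once $p_t<1/r$ the discrepancy bound is $\disc(M_t)\ge c\,N_t$, and this is \emph{tight} (trivially $\disc\le N_t$), so one step can only shrink $N_t$ by a constant factor while shrinking area by a constant factor. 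Your assertion that the Phase-2 iteration count is ``controlled independently of $mn$'' is not borne out by either proposed mechanism. The observation $|X||Y|\ge c\,m_tn_t$ merely re-derives constant-factor area retention; it does not give you a family of witnesses of tunable size --- the discrepancy theorem hands you one $(X,Y)$, and random sub-sampling of it preserves density in expectation rather than driving it to zero. Likewise, the random-sampling endgame needs $N_t\le 2^{c''\sqrt r}$, but from $N_0\asymp m_tn_t/r$ geometric decay requires $\Theta(\log(mn))$ further steps, so the cumulative Phase-2 area loss is $2^{-\Theta(\log mn)}$, not $2^{-O(\sqrt r)}$.

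The paper closes the sparse regime with a one-shot combinatorial lemma that uses $\rank\le r$ directly rather than through discrepancy (its Lemma~4.1, after Gavinsky--Lovett): once $p\le 1/(8r)$, delete the rows and columns containing more than a $1/(4r)$-fraction of ones (at most half of each are removed); in what remains, if there were no $(n/4)\times(n/4)$ all-zero block, one could greedily build an $(r+1)\times(r+1)$ permutation submatrix, contradicting the rank bound. This single step replaces your entire Phase 2 and is the missing idea in your outline.
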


One of the key components of the proof of Lovett \cite{Lovett} is a result of Linial, Mendelson, Schechtman, and Shraibman \cite{LMSS} (see also \cite{LS}) on the \emph{discrepancy} of low-rank matrices. We examine an alternative definition of discrepancy, inspired by the one commonly used in graph theory \cite{EGPS,RST}. This notion of discrepancy provides stronger bounds for sparse matrices, which allows us to remove  the log-factor from Lovett's bound. Given a binary matrix $M\in \{0,1\}^{m\times n}$, let $|M|$ denote the number of 1 entries of $M$. Writing $p=\frac{|M|}{mn}$, define the \emph{discrepancy} of $M$ as
$$\disc(M)=\max_{X\subset [m], Y\subset [n]}\big||M[X\times Y]|-p|X|\cdot |Y|\big|.$$
Viewing $M$ as the bipartite graph, whose vertex classes are the rows and columns, and whose edges correspond to the 1 entries of $M$, this definition of discrepancy is a bipartite analogue of the graph discrepancy 
introduced in the 80's by Erd\H{o}s, Goldberg, Pach, and Spencer \cite{EGPS} and extensively studied since then. Our main contribution is the following lower bound on the discrepancy of low-rank matrices.

\begin{theorem}\label{thm:disc_main}
    There exists a constant $c>0$ such that the following holds. Let $M\in \{0,1\}^{m\times n}$ such that $\rank(M)\leq r$. If $p=\frac{|M|}{mn}\leq 1/2$, then $\disc(M)\geq cmn\cdot \min\left\{p,\frac{p^{1/2}}{\sqrt{r}}\right\}$.
\end{theorem}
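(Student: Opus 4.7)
The plan is to reduce the problem to analyzing the centered matrix $A = M - pJ$, which has rank at most $r+1$ and squared Frobenius norm $\|A\|_F^2 = mnp(1-p) \geq mnp/2$, and for which $\disc(M)$ equals the cut norm $\|A\|_\square = \max_{X\subseteq[m],\,Y\subseteq[n]} |1_X^T A 1_Y|$. I would first pass to an SDP relaxation via Grothendieck's inequality: up to a universal constant,
$$\|A\|_\square \;\gtrsim\; \gamma_2^*(A) \;:=\; \max\Bigl\{\textstyle\sum_{i,j} A_{ij}\langle u_i, v_j\rangle : \|u_i\|,\|v_j\|\leq 1\Bigr\}.$$
By the duality relation $\gamma_2(A)\,\gamma_2^*(A) \geq \|A\|_F^2$, it then suffices to upper bound $\gamma_2(A)$ with the correct dependence on the density $p$.

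The technical heart of the argument is to prove $\gamma_2(A) \leq O(\sqrt{rp})$ in the dense regime $p \geq 1/r$. Starting from any rank-$r$ factorization $M = UV^T$, one has $A = \tilde U \tilde V^T$ with $\tilde U = [U,\,-\sqrt{p}\,\mathbf{1}]$ and $\tilde V = [V,\,\sqrt{p}\,\mathbf{1}]$. The naive bound $\gamma_2(A) \leq \sqrt{r+1}\,\|A\|_\infty = O(\sqrt{r})$ treats every row of $\tilde U$ as if it could have $\ell_2$-norm $1$; this is a factor $\sqrt{p}$ worse than what we need. To sharpen it, I would use the identity $\sum_i \|U_i\|^2 = \|M\|_F^2 = mnp$, which pins down the \emph{average} row norm of $U$ to order $\sqrt{rp}$ when one takes $U$ from a balanced (e.g.\ SVD-based) factorization, and then pass from average to maximum via a balancing argument: either restrict to a large subset of rows and columns on which the row norms are approximately uniform, or perform a change of basis in $\mathbb{R}^{r+1}$ that equalizes them. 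Combined with the chain above, this yields $\|A\|_\square \gtrsim \|A\|_F^2/\gamma_2(A) \asymp mn\sqrt{p/r}$, matching the theorem in this regime.

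In the sparse regime $p \leq 1/r$ the target becomes $\disc(M) \geq \Omega(mnp) = \Omega(|M|)$, which is close to the trivial ceiling $\disc(M) \leq |M|$; here a purely combinatorial argument seems more natural. The idea is that a rank-$r$ binary matrix with at most $mn/r$ ones must contain an all-zero combinatorial rectangle $X \times Y$ of area $\Omega(mn)$, obtained for instance by deleting a constant fraction of rows and columns that carry a disproportionate share of the (few) ones, combined with a Zarankiewicz-type argument driven by the rank constraint. Such a rectangle witnesses $\disc(M) \geq p\,|X|\,|Y| \gtrsim mnp$.

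I expect the main obstacle to be the density-sensitive $\gamma_2$ bound. The standard LMSS/Grothendieck machinery natively produces the density-independent estimate $\gamma_2(A) \lesssim \sqrt{r}$, and gaining the extra $\sqrt{p}$ factor genuinely requires exploiting the strong asymmetry of $A = M-pJ$: only a $p$-fraction of entries are large (of order $1-p$), while the rest are small (of order $p$). Controlling the \emph{maximum} row norm (not only the average) in a well-chosen low-rank factorization of $M$ is where the delicate spectral analysis must be carried out, and this is the step where logarithmic losses — of the kind that enlarged Lovett's original $O(\sqrt{r}\log r)$ bound — are most likely to creep in.
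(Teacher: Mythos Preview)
Your Grothendieck/duality framework is sound up to the point where you assert $\gamma_2(M-pJ)\le O(\sqrt{rp})$; that is the entire content of the theorem, and your sketch does not establish it. The heuristic ``$\sum_i\|U_i\|^2=\|M\|_F^2$'' only holds when $V$ has orthonormal columns, in which case the rows of $V$ are merely bounded by $1$ and the product of the two maxima is at best $\sqrt{\Delta}\cdot 1\approx\sqrt{np}$, not $\sqrt{rp}$. Switching to the balanced factorization $M=(P\Sigma^{1/2})(Q\Sigma^{1/2})^T$ ties $\sum_i\|U_i\|^2$ and $\sum_j\|V_j\|^2$ to the nuclear norm $\|M\|_*\le\sqrt{r\cdot mnp}$, which does give the right product $\sqrt{rp}$ \emph{on average}; but $\gamma_2$ is governed by the \emph{maximum} row norms, and these are $\sum_k\sigma_kP_{ik}^2$ --- spectral leverages, not degrees. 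Your proposed ``balancing'' (delete heavy rows, change basis) only controls degrees, so after trimming you still have no handle on the leverages and hence no factorization with uniformly small rows on both sides. In effect you are trying to certify an upper bound on $\gamma_2$, which is the hard direction of the duality: it requires \emph{constructing} a good factorization, and nothing in the outline does this. (The bound $\gamma_2(M-pJ)=O(\sqrt{rp})$ may well be true, but if so it is itself a result at least as strong as the theorem you are trying to prove, and it does not follow from the LMSS machinery you invoke.)

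The paper avoids this by staying on the primal side of the same SDP. Instead of bounding $\gamma_2$, it writes down an explicit feasible PSD witness $X=\frac{1}{\Delta}\sum_{i\le n}\lambda_i^2\,v_iv_i^T$ built from the eigendecomposition of the bipartite adjacency matrix $A$, checks $X_{ii}\le 1$ via $X\preceq\frac{1}{\Delta}A^2$, and computes $\disc(X)\ge\frac{1}{\Delta}\sum_{i=2}^{r}\lambda_i^{3}$. Since $\sum_{i=2}^{r}\lambda_i^{2}\ge dn/3$, the power-mean inequality gives $\sum_{i=2}^{r}\lambda_i^{3}\gtrsim (dn)^{3/2}/\sqrt{r}$, and dividing by $\Delta\approx d$ yields the claimed $d^{1/2}n^{3/2}/\sqrt r$. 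The degree-trimming step you also propose is then used, but only to reduce $\Delta$ to $O(d)$; the density-sensitive gain comes entirely from this cubic-eigenvalue witness, not from any factorization bound. Your sparse-regime argument (large all-zero rectangle from the rank constraint) is correct and matches a lemma the paper proves separately, but in the paper it is not needed for Theorem~\ref{thm:disc_main} itself: the spectral estimate already delivers $\min\{dn,\,d^{1/2}n^{3/2}/\sqrt r\}$ uniformly.
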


Note that this bound is the best possible. In case $p\gg 1/r$, i.e. when the minimum is attained by $p^{1/2}/\sqrt{r}$, consider a random $r\times r$ binary matrix $R$ in which each entry is $1$ with probability $p$. It is easy to show that  $\disc(R)=\Theta(\sqrt{p}r^{3/2})$ with high probability. For every $m,n$, then one can construct the $m\times n$ matrix $M$ by repeating each row of $R$ $m/r$-times, and repeating each column $n/r$-times. Then $\rank(M)\leq r$ and $\disc(M)=\frac{mn}{r^2}\cdot\disc(R)=\Theta(mn p^{1/2}/\sqrt{r})$. On the other hand, if $p\ll 1/r$, then the minimum is attained by $p$. Trivially, $|M|$ is always an upper bound on the discrepancy, so our theorem is tight in this case as well.

In order to prove Theorem \ref{thm:disc_main}, we follow the recent framework of R\"aty, Sudakov, and Tomon \cite{RST}. While their results are concerned about the usual graph theoretic notion of discrepancy, working in a bipartite setting simplifies many of their ideas. We consider a semidefinitie relaxation of the discrepancy, and then apply spectral techniques to prove our lower bounds. We remark that this approach is fairly different from (and perhaps more elementary than)  that of Linial, Mendelson, Schechtman, and Schraibman \cite{LS}, which relies on John's ellipsoid theorem \cite{ellipsoid}.

 \section{Discrepancy}

In this section, we present basic notions and results about matrices and discrepancy, and introduce a semidefinite relaxation. We omit floors and ceilings whenever they are not crucial.

An $m\times n$ binary matrix $M$ naturally corresponds to the bipartite graph, whose two vertex classes are identified with $[m]=\{1,\dots,m\}$ and $[n]$, and there is an edge between $i$ and $j$ if $M_{i,j}=1$. While we will use the language of binary matrices, this correspondence is good to keep in mind for many arguments.

Given  $M\in \{0,1\}^{m\times n}$, let $p(M)=\frac{|M|}{mn}$ be the \emph{density} of $M$, and let $d(M)=\frac{2|M|}{m+n}$  (which is the average degree of the corresponding bipartite graph). In what follows, fix some $M\in \{0,1\}^{m\times n}$ and write $p=p(M)$. Given $X\subset [m]$ and $Y\subset [n]$, $M[X\times Y]$ is the submatrix of $M$ induced by the rows $X$ and columns $Y$. Define
$$\disc_{M}(X,Y)=\disc(X,Y)=|M[X,Y]|-p|X||Y|.$$
Furthermore, define the \emph{positive discrepancy} of $M$ as $$\disc^{+}(M)=\max_{X\subset [m],Y\subset [n]} \disc(X,Y)$$ and  the \emph{negative discrepancy} as $$\disc^{-}(M)=\max_{X\subset [m],Y\subset [n]} -\disc(X,Y).$$
Then $\disc(M)=\max\{\disc^{+}(M),\disc^{-}(M)\}$. Note that as $\disc(\emptyset,\emptyset)=0$, we have $\disc^{+}(G)\geq 0$ and $\disc^{-}(G)\geq 0$. Also, if  $X$ and $X'$ are disjoint, then $\disc(X\cup X',Y)=\disc(X,Y)+\disc(X',Y)$. Next, we show that $\disc^{-}(M)$ and $\disc^{+}(M)$ cannot be too different.

\begin{claim}\label{claim:pos_neg}
$\disc^{+}(M)=\Theta(\disc^{-}(M))$.
\end{claim}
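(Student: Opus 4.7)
The plan is to exploit the simple identity that the discrepancy of the whole matrix is zero, combined with the additivity of $\disc(\cdot,\cdot)$ over disjoint partitions of either argument. Since $p=|M|/(mn)$, we have
$$\disc([m],[n]) = |M| - p\cdot mn = 0.$$
Moreover, for disjoint $X_1,X_2\subset [m]$, $\disc(X_1\cup X_2, Y)=\disc(X_1,Y)+\disc(X_2,Y)$ (as already noted in the text), and likewise in the second coordinate. Applying this to both the row-partition $[m]=X\cup\overline{X}$ and the column-partition $[n]=Y\cup\overline{Y}$ where $\overline{X}=[m]\setminus X$ and $\overline{Y}=[n]\setminus Y$, we obtain the key identity
$$\disc(X,Y)+\disc(X,\overline{Y})+\disc(\overline{X},Y)+\disc(\overline{X},\overline{Y})=\disc([m],[n])=0.$$

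From here both directions follow by a pigeonhole-style averaging. To show $\disc^-(M)\geq \tfrac13\disc^+(M)$, pick $(X,Y)$ with $\disc(X,Y)=\disc^+(M)=:D$. The identity forces the three remaining terms to sum to $-D$, so at least one of them is at most $-D/3$. The negation of that value is a lower bound for $\disc^-(M)$, giving $\disc^-(M)\geq D/3$. Exactly the same argument with the roles reversed (start from a pair $(X,Y)$ realizing $\disc^-(M)$) yields $\disc^+(M)\geq \tfrac13\disc^-(M)$, and combining the two inequalities gives $\disc^+(M)=\Theta(\disc^-(M))$ with explicit constant $3$ on either side.

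There is no real obstacle here: the whole argument is a two-line identity plus pigeonhole. The only thing to be a bit careful about is making sure additivity is invoked correctly in both coordinates, which is why I expand the identity via the partition $([m],[n]) = (X\cup\overline{X})\times(Y\cup\overline{Y})$ rather than trying to flip one coordinate at a time. No other properties of $M$ (such as being binary or having bounded rank) are needed — the claim is a purely combinatorial consequence of the zero-sum identity for $\disc$.
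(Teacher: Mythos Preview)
Your proof is correct and follows essentially the same approach as the paper: partition $[m]\times[n]$ into four blocks using $(X,\overline{X})$ and $(Y,\overline{Y})$, use the zero-sum identity $\disc([m],[n])=0$, and apply pigeonhole to find a block with discrepancy at most $-\disc^+(M)/3$, then argue symmetrically. The paper's proof is identical in structure and yields the same explicit constant~$3$.
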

\begin{proof}
 Let $X\subset [m]$ and $Y\subset [n]$ such that $\disc^{+}(M)=\disc(X,Y).$ Let $X'=[m]\setminus X$ and $Y'=[n]\setminus Y$. Then, $0=\disc([m],[n])=\disc(X,Y)+\disc(X',Y)+\disc(X,Y')+\disc(X',Y')$. Hence, at least  one of the terms is at most $-\disc(X,Y)/3$, which shows that $\disc^{-}(M)\geq \disc^{+}(M)/3$. Similarly, $\disc^{+}(M)\geq \disc^{-}(M)/3$.
\end{proof}

Next, we show that it is enough to consider submatrices of $M$ that intersect exactly half of the rows and half of the columns to get the discrepancy up to a constant.

\begin{claim}\label{claim:half}
   There exists $X_0\subset [m]$ and $Y_0\subset [n]$ such that $|X_0|=m/2$ and $|Y_0|=n/2$, and $$\disc(X_0,Y_0) \leq -\disc^{-}(M)/12.$$
\end{claim}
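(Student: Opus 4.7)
Let $D = \disc^{-}(M)$ and fix $X^* \subset [m]$, $Y^* \subset [n]$ with $\disc(X^*, Y^*) = -D$; write $a = |X^*|$ and $b = |Y^*|$. The plan is to produce a balanced pair $(X_0, Y_0)$ with $|X_0|=m/2$, $|Y_0|=n/2$ by a random modification of $(X^*, Y^*)$ and to show that $\mathbb{E}[\disc(X_0, Y_0)] \le -D/12$; a deterministic realisation of this expectation then yields the claim. The basic setup is the partition of $[m] \times [n]$ into the four quadrants determined by $X^*, X^{*c}$ and $Y^*, Y^{*c}$ (where $X^{*c} := [m]\setminus X^*$, and similarly for $Y^{*c}$). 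The four quadrant discrepancies sum to $\disc([m], [n]) = 0$; one of them is $\disc(X^*, Y^*) = -D$; and by Claim~\ref{claim:pos_neg} each lies in $[-D, 3D]$, since $\disc^{+}(M) \le 3\disc^{-}(M) = 3D$.

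In the easy sub-case $a \ge m/2$ and $b \ge n/2$, I would take $(X_0, Y_0)$ to be a uniformly random size-$(m/2, n/2)$ subpair of $(X^*, Y^*)$. Linearity of expectation gives
\[
\mathbb{E}[\disc(X_0, Y_0)] \;=\; \frac{(m/2)(n/2)}{ab}\cdot \disc(X^*, Y^*) \;=\; -\frac{mn}{4ab}\,D \;\le\; -\frac{D}{4},
\]
using $ab \le mn$, which is comfortably below $-D/12$, so some realisation satisfies the claim. For the other three sub-cases (where at least one of $a, b$ is strictly below half) the plan is a hybrid random construction: on any side whose extremal set is below half-size, extend by a uniformly random padding drawn from the complement to reach the target size; on any side whose extremal set is above half-size, restrict to a uniformly random subset of the target size. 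By linearity of expectation, the resulting $\mathbb{E}[\disc(X_0, Y_0)]$ is then a specific weighted combination of the four quadrant discrepancies, with coefficients determined by the sampling fractions.

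The main obstacle will be that the three quadrants neighbouring $(X^*, Y^*)$ may have positive discrepancy up to $+3D$, which can in principle offset the $-D$ contribution from $(X^*, Y^*)$ and make the naive hybrid expectation non-negative. The plan to handle this is to consider not one but four candidate hybrid constructions, one for each choice of ``anchor'' quadrant on which to base the sampling, and exploit two facts: first, the expectation of each candidate is a linear combination of the four quadrant discrepancies whose coefficients, summed over the four candidates, equal $1$ on every quadrant, so the four candidate expectations sum to $\disc([m],[n]) = 0$; second, each individual quadrant discrepancy lies in $[-D, 3D]$ by Claim~\ref{claim:pos_neg}. A short case analysis combining the sum-to-zero identity with the $[-D,3D]$ bound on the quadrant contributions shows that at least one of the four candidate expectations must be at most $-D/12$. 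Selecting the best anchor and passing to a deterministic realisation completes the proof.
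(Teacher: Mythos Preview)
Your approach differs from the paper's. The paper starts from a pair $(X,Y)$ maximising $|\disc(X,Y)|$ and adjusts the two sides \emph{sequentially}: it first forces $|X|=m/2$ by random extension or restriction, using the maximality of $(X,Y)$ to bound the loss by a factor $1/2$; it then re-maximises subject to $|X|=m/2$ and repeats on the column side; a final four-quadrant complementation corrects the sign if necessary. You instead anchor directly at the negative-discrepancy witness and run four balanced constructions in parallel, relying on the sum-to-zero of their expectations.

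Your scheme can be made to work, but the decisive step is not justified as written. Knowing only that $\sum_i E_i=0$ and that each quadrant discrepancy lies in $[-D,3D]$ gives merely $\min_i E_i\le 0$, not $\min_i E_i\le -D/12$: for instance, nothing you have stated rules out all four $E_i$ being tiny. To obtain the quantitative bound you must exploit the specific coefficient structure of the $E_i$, not just their sum. Concretely, in the case $a<m/2$, $b<n/2$, with $u=\alpha/(1-\alpha)$ and $v=\beta/(1-\beta)$ in $[0,1]$, one can check the \emph{second} identity
\[
E_1-uE_2-vE_3+uvE_4=-D,
\]
and adding it to $\sum_i E_i=0$ gives $2E_1+(1-u)E_2+(1-v)E_3+(1+uv)E_4=-D$ with non-negative weights summing to at most $5$, hence $\min_i E_i\le -D/5$. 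The mixed-size cases need their own analogous identities. This computation is the heart of the argument, and it is exactly what your ``short case analysis combining the sum-to-zero identity with the $[-D,3D]$ bound'' would have to supply; as stated, those two ingredients alone do not deliver the conclusion.
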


\begin{proof}
Let $X\subset [m]$ and $Y\subset [n]$ such that $|\disc(X,Y)|=\disc(M)$. Assume that $\disc(X,Y)\geq 0$, the other case can be handled similarly. 

First, let us assume that $|X|\leq m/2$. Let $X_1$ be a random $m/2$ element subset of $[m]$ containing $X$. Then 
\begin{align*}
    \mathbb{E}(\disc(X_1,Y))&=\disc(X,Y)+\mathbb{E}(\disc(X_1\setminus X,Y))
    =\disc(X,Y)+\frac{m/2-|X|}{m-|X|}\disc(A\setminus X,Y)\\
    &\geq \disc(X,Y)- \frac{m/2-|X|}{m-|X|}\disc(X,Y)\geq \frac{1}{2}\disc(X,Y).
\end{align*}
Here, the first inequality holds by the maximality of $(X,Y)$. Therefore, there exists a choice for $X_1$ such that $\disc(X_1,Y)\geq \frac{1}{2}\disc(X,Y)$. In case $|X|\geq m/2$, let $X_1$ be a random $m/2$ element subset of $X$. Similarly as before, by considering the expectation, there exists a choice for $X_1$ such  that $\disc(X_1,Y)\geq \frac{1}{2}\disc(X,Y)$.

Now let $X_2\subset [m]$ and $Y_1\subset [n]$ such that $|X_2|=m/2$, and $|\disc(X_2,Y_1)|$ is maximal under these conditions. Assume that $\disc(X_2,Y_1)\geq 0$, the other case can be handled similarly. Then $\disc(X_2,Y_1)\geq \disc(X_2,Y)$. If $|Y_1|\leq n/2$, let $Y_2$ be a random $n/2$ element subset of $[n]$ that contains $Y_1$. As before, by considering the expectation of $\disc(X_2,Y_2)$, we get that there is a choice for $Y_2$ such that $\disc(X_2,Y_2)\geq \frac{1}{2}\disc(X_2,Y_1)\geq \frac{1}{4}\disc(X,Y)$. If $|Y_1|\geq n/2$, then we can proceed as above by taking $Y_2$ to be a random $m/2$ element subset of $Y_1$.

In conclusion, we get $X_2\subset [m]$ and $Y_2\subset [n]$ such that $|X_2|=m/2$, $|Y_2|=n/2$, and $|\disc(X_2,Y_2)|\geq \frac{1}{4}|\disc(X,Y)|$. If $\disc(X_2,Y_2)\leq 0$, we are done, so we may assume that $\disc(X_2,Y_2)>0$.  Let $X_3=[m]\setminus X_2$, and $Y_3=[n]\setminus Y_2$. Writing the equality $0=\disc(X_2,Y_2)+\disc(X_3,Y_2)+\disc(X_2,Y_3)+\disc(X_3,Y_3)$, at least one of the terms is at most $-\disc(X_2,Y_2)/3\leq -|\disc(X,Y)|/12$. Hence, we can choose $X_0$ and $Y_0$ such that $|X_0|=n/2$, $|Y_0|=m/2$ and $\disc(X_0,Y_0)\leq -\disc(M)/12\leq -\disc^{-}(M)/12$.
\end{proof}

Consider the following relaxation of the positive discrepancy. Given $\mathbf{x}\in \mathbb{R}^m$ and $\mathbf{y}\in \mathbb{R}^n$, let
$$\disc(\mathbf{x},\mathbf{y})=\mathbf{x}^T(M-pJ)\mathbf{y},$$
where $J$ is the $m\times n$ sized all-1 matrix. Also, define $$\disc_0^{+}(M)=\max_{\mathbf{x}\in [-1,1]^m,\mathbf{y}\in [-1,1]^n} \disc(\mathbf{x},\mathbf{y}).$$ 

\begin{claim}
$\disc^{+}(M)=\Theta(\disc_0^{+}(M)).$
\end{claim}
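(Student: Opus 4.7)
The plan is to prove the two inequalities separately. The upper bound $\disc^{+}(M)\leq \disc_0^{+}(M)$ is immediate: for any $X\subseteq [m]$ and $Y\subseteq [n]$, the indicator vectors $\mathbf{1}_X$ and $\mathbf{1}_Y$ lie in $[-1,1]^m$ and $[-1,1]^n$ respectively and satisfy $\mathbf{1}_X^T(M-pJ)\mathbf{1}_Y=\disc(X,Y)$, so the maximum over $\{0,1\}$-vectors is bounded by the maximum over the relaxation.

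For the reverse direction $\disc_0^{+}(M)\leq O(\disc^{+}(M))$, I would use a decompose-and-round argument. Let $\mathbf{x}\in [-1,1]^m$ and $\mathbf{y}\in [-1,1]^n$ attain $\disc_0^{+}(M)$, and write $\mathbf{x}=\mathbf{x}^{+}-\mathbf{x}^{-}$ and $\mathbf{y}=\mathbf{y}^{+}-\mathbf{y}^{-}$ as their entrywise positive and negative parts, each having entries in $[0,1]$. Bilinearity of $\disc(\cdot,\cdot)$ yields
$$\disc_0^{+}(M)=\disc(\mathbf{x}^{+},\mathbf{y}^{+})-\disc(\mathbf{x}^{+},\mathbf{y}^{-})-\disc(\mathbf{x}^{-},\mathbf{y}^{+})+\disc(\mathbf{x}^{-},\mathbf{y}^{-}),$$
so at least one of the four signed terms on the right is at least $\disc_0^{+}(M)/4$.

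The core rounding step goes as follows: given nonnegative $\mathbf{u}\in [0,1]^m$ and $\mathbf{v}\in [0,1]^n$, sample independent random sets $U\subseteq [m]$ and $V\subseteq [n]$ by putting $i\in U$ with probability $u_i$ and $j\in V$ with probability $v_j$. Independence and linearity of expectation give $\mathbb{E}[\disc(U,V)]=\mathbf{u}^T(M-pJ)\mathbf{v}=\disc(\mathbf{u},\mathbf{v})$, so some realization meets or exceeds this mean while another lies at or below it. Hence $\disc^{+}(M)\geq \disc(\mathbf{u},\mathbf{v})$ and $\disc^{-}(M)\geq -\disc(\mathbf{u},\mathbf{v})$. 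Applying this to the winning term among the four above gives $\max\{\disc^{+}(M),\disc^{-}(M)\}\geq \disc_0^{+}(M)/4$, and invoking \Cref{claim:pos_neg} to trade $\disc^{-}$ for $\disc^{+}$ up to a constant factor closes the argument.

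The only real subtlety is tracking the signs in the bilinear expansion so that positive and negative contributions are routed to $\disc^{+}$ and $\disc^{-}$ respectively before appealing to \Cref{claim:pos_neg}; no essential obstacle is expected beyond this routine bookkeeping.
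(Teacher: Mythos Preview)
Your proof is correct and follows essentially the same structure as the paper's: expand the optimizer bilinearly into four terms, take the large one by pigeonhole, and finish via \Cref{claim:pos_neg}. The only difference is in how the real-valued optimizer is converted to sets: the paper first observes that, by linearity in each coordinate, the maximum is attained at a vertex $\mathbf{x}\in\{-1,1\}^m$, $\mathbf{y}\in\{-1,1\}^n$, so the four terms are already of the form $\disc(X_i,Y_j)$; you instead split into positive and negative parts and then use randomized rounding to pass from $[0,1]$-vectors to indicator vectors. Both reductions are standard and yield the same constant loss of $1/4$ before invoking \Cref{claim:pos_neg}.
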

\begin{proof}
Let $\mathbf{x}\in [-1,1]^m,\mathbf{y}\in [-1,1]^n$ be such that $\disc_0^{+}(M)=\disc(\mathbf{x},\mathbf{y})$. As $\disc(\mathbf{x},\mathbf{y})$ is a linear function in every variable, we can find $\mathbf{x}\in \{-1,1\}^m$ and $\mathbf{y}\in \{-1,1\}^n$ achieving the maximum. Let $X_1=\{i\in [m]:\mathbf{x}(i)=1\}$, $X_2=[m]\setminus X_1$, $Y_1=\{i\in [n]:\mathbf{y}(i)=1\}$ and $Y_2=[n]\setminus Y_1$. Then
$$\disc(\mathbf{x},\mathbf{y})=\disc(X_1,Y_1)-\disc(X_1,Y_2)-\disc(X_2,Y_1)+\disc(X_2,Y_2).$$
Hence, at least one of the four terms has absolute value at least $\frac{1}{4}\disc_0^+(M)$. But this shows that either $\disc^{+}(M)$ or $\disc^{-}(M)$ is at least $\frac{1}{4}\disc_0^+(M)$, and we are done by Claim \ref{claim:pos_neg}.
\end{proof}

Let us further relax the definition of discrepancy by assigning vector values to the rows and columns. That is, let
$$\pdisc_0(M)=\max \sum_{i,j} M_{i,j}\langle v_i,w_j\rangle -p\sum_{i,j}\langle v_i,w_j\rangle,$$
where the maximum is taken over all $v_1,\dots,v_m,w_1,\dots, w_{n}\in \mathbb{R}^{m+n}$ such that $||v_i||_2,||w_j||_2\leq 1$. Clearly, $\pdisc_0(M)\geq \disc^{+}(M)$. However, it follows from Grothendieck's inequality \cite{Groth} that $\pdisc_0(M)=O(\disc^{+}(M))$ also holds.

	\begin{lemma}[Grothendieck's inequality]\label{lemma:groth}
		There exists a universal constant $K>0$ such that the following holds. For a matrix $Q\in \mathbb{R}^{n\times m}$ let
		$\beta=\sup\left\{\sum_{i,j} Q_{i,j}x_iy_j: x_i, y_j \in [-1,1]\right\}$
		and
		$\beta^{*}=\sup\left\{\sum_{i,j} Q_{i,j}\langle v_i,w_j\rangle : v_i, w_j,\in \mathbb{R}^{n+m},||v_i||_2\leq 1, ||w_i||_2\leq 1\right\}$.
		Then $\beta\leq \beta^{*}\leq K\beta$.
	\end{lemma}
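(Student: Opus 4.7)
The plan is to handle the two inequalities separately. The lower bound $\beta \leq \beta^{*}$ is immediate: given scalars $x_i, y_j \in [-1,1]$, set $v_i = x_i e$ and $w_j = y_j e$ for a fixed unit vector $e\in \mathbb{R}^{n+m}$; these satisfy the constraints of $\beta^{*}$ and realize $\langle v_i, w_j\rangle = x_iy_j$, so every value achieved by $\beta$ is also achieved by $\beta^{*}$.

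For the interesting direction $\beta^{*}\leq K\beta$, my plan is to use Gaussian rounding. Fix (approximately) optimal unit vectors $v_1,\dots,v_n,w_1,\dots,w_m$ for $\beta^{*}$, draw a standard Gaussian $g$ in the ambient space, and define $x_i = \operatorname{sign}\langle g, v_i\rangle$, $y_j = \operatorname{sign}\langle g, w_j\rangle$. The classical identity of Grothendieck gives
$$\mathbb{E}[x_iy_j] = \frac{2}{\pi}\arcsin(\langle v_i, w_j\rangle).$$
This would already suffice if $\arcsin$ were linear, but it is not, so the expected value of $\sum_{i,j}Q_{i,j}x_iy_j$ is not directly comparable to $\beta^{*}$.

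To straighten out the $\arcsin$, I would apply Krivine's trick: before rounding, replace $v_i, w_j$ with new unit vectors $\tilde{v}_i, \tilde{w}_j$ in a larger Hilbert space satisfying $\langle \tilde{v}_i, \tilde{w}_j\rangle = \sin(c\langle v_i, w_j\rangle)$ for a well-chosen constant $c>0$. The construction uses the odd Taylor expansion $\sin(cx) = \sum_{k\geq 0}(-1)^k (cx)^{2k+1}/(2k+1)!$ together with the tensor-power identity $\langle u^{\otimes(2k+1)}, v^{\otimes(2k+1)}\rangle = \langle u,v\rangle^{2k+1}$: setting $\tilde{v}_i = \bigoplus_{k}(-1)^k\alpha_k v_i^{\otimes(2k+1)}$ and $\tilde{w}_j = \bigoplus_{k}\alpha_k w_j^{\otimes(2k+1)}$ with $\alpha_k^2 = c^{2k+1}/(2k+1)!$ yields the target inner product while $\|\tilde{v}_i\|^2 = \|\tilde{w}_j\|^2 = \sinh(c)$. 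Choosing $c = \ln(1+\sqrt{2})$ so that $\sinh(c)=1$ makes these unit vectors. Gaussian rounding on $(\tilde{v}_i,\tilde{w}_j)$ then produces $\mathbb{E}[x_iy_j] = (2/\pi)\arcsin(\sin(c\langle v_i, w_j\rangle)) = (2c/\pi)\langle v_i, w_j\rangle$ (since $c<\pi/2$), so averaging gives some realization of the signs with $\sum_{i,j}Q_{i,j}x_iy_j \geq (2c/\pi)\beta^{*}$, proving $K \leq \pi/(2\ln(1+\sqrt{2}))$.

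The main obstacle is the Krivine linearization step; the remaining ingredients (embedding $[-1,1]$ into the unit ball, the Gaussian identity, and converting an expectation into an existence statement) are routine. The key algebraic miracle that makes the step work is that the Taylor coefficients of $\sin$ and $\sinh$ agree in absolute value, so the unit-norm constraint $\sum_k \alpha_k^2 = \sinh(c) = 1$ and the target inner product $\sin(c\langle v_i, w_j\rangle)$ can be balanced simultaneously by tuning the single parameter $c$.
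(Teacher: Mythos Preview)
The paper does not give its own proof of this lemma: Grothendieck's inequality is quoted as a black box with a citation to \cite{Groth} and then applied. So there is nothing to compare against on the paper's side.

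Your argument is the standard Krivine proof, and it is essentially correct. Two minor remarks. First, in the Krivine step you tacitly assume the $v_i,w_j$ are \emph{unit} vectors (this is needed both for Grothendieck's identity $\mathbb{E}[\operatorname{sign}\langle g,u\rangle\operatorname{sign}\langle g,v\rangle]=(2/\pi)\arcsin\langle u,v\rangle$ and for the norm computation $\|\tilde v_i\|^2=\sinh c$). The statement only gives $\|v_i\|\le 1$, so you should first pass to unit vectors via the usual padding $v_i\mapsto(v_i,\sqrt{1-\|v_i\|^2},0)$, $w_j\mapsto(w_j,0,\sqrt{1-\|w_j\|^2})$, which preserves all inner products $\langle v_i,w_j\rangle$. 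Second, your final sentence (``some realization of the signs achieves $\ge (2c/\pi)\beta^*$'') is correct but slightly indirect; the cleanest way to finish is simply that for \emph{every} realization one has $\sum_{i,j}Q_{i,j}x_iy_j\le\beta$ by definition, so the expectation $(2c/\pi)\beta^*$ is at most $\beta$, giving $K\le \pi/(2\ln(1+\sqrt 2))$.
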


\begin{claim}
    $\pdisc_0(M)=\Theta(\disc^{+}(M))=\Theta(\disc^{-}(M))$
\end{claim}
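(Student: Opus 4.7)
The plan is to apply Grothendieck's inequality (Lemma~\ref{lemma:groth}) directly to the matrix $Q = M - pJ \in \mathbb{R}^{m \times n}$. With this choice of $Q$, the scalar optimum
\[
\beta = \sup\Big\{\sum_{i,j}(M-pJ)_{i,j} x_i y_j : x_i, y_j \in [-1,1]\Big\}
\]
is exactly $\disc_0^+(M)$, and the vector optimum $\beta^*$ is exactly $\pdisc_0(M)$ (the ambient dimension $n+m$ in the statement of Grothendieck matches the one used in the definition of $\pdisc_0$, and in any case one can always enlarge the dimension without changing the supremum). Grothendieck's inequality therefore yields
\[
\pdisc_0(M) = \beta^* \leq K\beta = K\,\disc_0^+(M).
\]

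For the reverse inequality, observe that any assignment $x_i, y_j \in [-1,1]$ can be lifted to vectors $v_i = x_i e, w_j = y_j e$ for an arbitrary fixed unit vector $e$, which still satisfies $\|v_i\|_2, \|w_j\|_2 \leq 1$ and gives the same value of the objective. Hence $\pdisc_0(M) \geq \disc_0^+(M)$, and together with the previous bound we obtain $\pdisc_0(M) = \Theta(\disc_0^+(M))$.

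Finally, chaining this with the claim immediately preceding (which gives $\disc_0^+(M) = \Theta(\disc^+(M))$) and with Claim~\ref{claim:pos_neg} (which gives $\disc^+(M) = \Theta(\disc^-(M))$), we conclude
\[
\pdisc_0(M) = \Theta(\disc_0^+(M)) = \Theta(\disc^+(M)) = \Theta(\disc^-(M)),
\]
as desired. There is essentially no obstacle here: the proof is a packaging of Grothendieck's inequality with the two preceding claims. The only small point to verify is that the matrix $M - pJ$ fits into the hypothesis of Lemma~\ref{lemma:groth}, which is immediate since Grothendieck's inequality holds for arbitrary real matrices.
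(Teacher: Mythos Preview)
Your proof is correct and follows exactly the same approach as the paper: apply Grothendieck's inequality to the matrix $Q = M - pJ$, identify $\beta$ with $\disc_0^+(M)$ and $\beta^*$ with $\pdisc_0(M)$, and chain with the two preceding claims. The paper's proof is in fact just the one-line instruction to do this, so your version simply spells out the details.
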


\begin{proof}
Apply Groethendieck's inequality with the matrix $Q\in \mathbb{R}^{m\times n}$ defined as $Q_{i,j}=M_{i,j}-p$. 
\end{proof}

Let $N=m+n$. Let $A\in\mathbb{R}^{N\times N}$ be the adjacency matrix of the bipartite graph corresponding to $M$, and let $L$ be the adjacency matrix of the complete bipartite graph on the same vertex set. Formally, $A$ is the symmetric matrix defined as $A_{i,j+m}=A_{j+m,i}=M_{i,j}$ for $(i,j)\in [m]\times [n]$, and $A_{i,j}=0$ for $(i,j)\in [m]\times [m]$ and $(i,j)\in [m+1,N]\times [m+1,N]$. 
Note that $\rank(A)=2\rank(M)$.
On the other hand, we can write  $L=\frac{1}{2}(e\cdot e^{T}-f\cdot f^{T})$, where $e$ is the all 1 vector, and $f\in \mathbb{R}^N$ is defined $f(i)=1$ if $i\in [m]$ and $f(i)=-1$ if $i\in [m+1,N]$. Call $A$ the \emph{symmetrization} of $M$.

Given a matrix $X\in \mathbb{R}^{N\times N}$, define its discrepancy as
$$\disc_M(X)=\disc(X)=\langle X,A\rangle-p\langle X,L\rangle,$$
where $\langle .,.\rangle$ denotes the Frobenius (i.e. entry-wise) inner product. Furthermore, define 
$$\pdisc(M)=\max \left\{\disc(X): X\mbox{ is symmetric, positive semidefinite, }\forall i\in [N], X_{i,i}\leq 1\right\}.$$ Then $\pdisc(M)=2\pdisc_{0}(M)$, as every $X$ satisfying the conditions in the definition of $\pdisc(M)$ corresponds to the Gramm-matrix of vectors $v_1,\dots,v_m,w_1,\dots,w_n$ of length at most $1$. In conclusion, we arrive to the following.

\begin{claim}
    $\pdisc(M)=\Theta(\disc^{+}(M))=\Theta(\disc^{-}(M)).$
\end{claim}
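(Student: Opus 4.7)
The plan is to reduce this claim to the previous one by establishing the identity $\pdisc(M)=2\pdisc_0(M)$ that was already announced informally in the paragraph preceding the statement; combined with $\pdisc_0(M)=\Theta(\disc^{+}(M))=\Theta(\disc^{-}(M))$, this yields the result immediately. So the only real work is to verify carefully that the SDP formulation in terms of a PSD matrix $X$ agrees (up to the factor 2) with the vector formulation defining $\pdisc_0(M)$.

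First, I would show $\pdisc(M)\leq 2\pdisc_0(M)$. Take any symmetric PSD $X\in\mathbb{R}^{N\times N}$ with $X_{ii}\leq 1$. Writing $X=U^{T}U$, the columns $u_1,\dots,u_N$ of $U$ satisfy $\|u_i\|_2^2=X_{ii}\leq 1$. Relabel the first $m$ columns as $v_1,\dots,v_m$ and the last $n$ as $w_1,\dots,w_n$, so that $X_{i,j+m}=\langle v_i,w_j\rangle$. Since $A$ is block off-diagonal (with $M$ in the top-right block and $M^{T}$ in the bottom-left block), a direct expansion gives
$$\langle X,A\rangle=2\sum_{i\in[m],\,j\in[n]} M_{i,j}\langle v_i,w_j\rangle,$$
and an identical computation with $L$ (whose only nonzero block is the all-$1$ block) gives $\langle X,L\rangle=2\sum_{i,j}\langle v_i,w_j\rangle$. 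Therefore $\disc_M(X)=2\bigl(\sum_{i,j}M_{i,j}\langle v_i,w_j\rangle-p\sum_{i,j}\langle v_i,w_j\rangle\bigr)\leq 2\pdisc_0(M)$.

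For the reverse inequality, start from vectors $v_1,\dots,v_m,w_1,\dots,w_n\in\mathbb{R}^{m+n}$ of norm at most $1$ achieving (or approaching) $\pdisc_0(M)$, assemble them as the columns of a matrix $U$, and set $X=U^{T}U$. Then $X$ is symmetric PSD with $X_{ii}\leq 1$, and the same calculation shows $\disc_M(X)$ equals twice the objective in the definition of $\pdisc_0(M)$. Hence $\pdisc(M)\geq 2\pdisc_0(M)$, and combining the two bounds gives $\pdisc(M)=2\pdisc_0(M)$. Invoking the previous claim, $\pdisc(M)=\Theta(\disc^{+}(M))=\Theta(\disc^{-}(M))$, as desired.

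There is no real obstacle here; the only thing to be careful about is the factor of $2$ coming from the fact that $A$ and $L$ are symmetric $N\times N$ matrices whose relevant entries each appear twice (once in the top-right and once in the bottom-left block), while the vector formulation sums over the pairs $(i,j)\in[m]\times[n]$ only once.
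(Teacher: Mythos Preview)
Your proposal is correct and follows exactly the approach the paper takes: the paper's ``proof'' consists solely of the remark, in the paragraph preceding the claim, that $\pdisc(M)=2\pdisc_0(M)$ via the Gram-matrix correspondence, after which the claim follows from the previous one. You have simply written out that correspondence in more detail (including the factor-of-$2$ bookkeeping from the symmetric off-diagonal blocks of $A$ and $L$), so there is nothing to add.
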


Finally, let us argue that considering only those matrices with equal number of rows and columns does not restrict us too much. Let $M^{\otimes}$ be the $(mn)\times (mn)$ sized matrix we get by repeating each row of $M$ $n$-times, and then repeating every column $m$ times. It is easy to see that $$\frac{1}{(mn)^2}\disc^{+}_0(M^{\otimes})=\frac{1}{mn}\disc^{+}_0(M),$$ 
and $\rank(M)=\rank(M^{\otimes})$.

\section{Low rank matrices}

Now let us turn to bounding $\pdisc(M)$. Let $A$ be the adjacency matrix of the bipartite graph corresponding to $M$, defined above. Let $\lambda_1\geq \dots\geq \lambda_{N}$ be the eigenvalues of $A$, and let $v_1,\dots,v_{N}$ be a corresponding orthonormal set of eigenvectors. Note that as $A$ is the adjacency matrix of a bipartite graph, we have $\lambda_i=-\lambda_{N+1-i}$ for $i\in [N]$, and the vectors $v_i(j)$ and $v_{N+1-i}(j)$ agree on $[m]$, and are opposite on $[m+1,N]$, see Chapter 11 of \cite{lovasz}. Formally, $v_i(j)=f(j)\cdot v_{N+1-i}(j)$ for every $i,j\in [N]$, where $f(j)=1$ if $j \in [m]$ and $-1$ if 
$j \in [m+1, N]$. Note that $\lambda_1\geq \dots\geq \lambda_n\geq 0\geq \lambda_{n+1}\geq\dots\geq \lambda_{N}$.

\begin{lemma}\label{lemma:discX}
Let $a_1,\dots,a_{N}\geq 0$ and let $X=\sum_{i=1}^{N}a_i\cdot  v_i\cdot v_i^{T}$. Then
$$\disc(X)\geq \sum_{i=1}^{N}a_i\lambda_i-\frac{pN}{2}\cdot\max_{i}(a_i+a_{N+1-i}).$$
\end{lemma}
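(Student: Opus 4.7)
The plan is to split the discrepancy as $\disc(X)=\langle X,A\rangle-p\langle X,L\rangle$ and handle each term using the spectral decomposition $A=\sum_i\lambda_i v_iv_i^T$.

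For the first term, since $\{v_i\}$ is orthonormal, the cross-products $\langle v_iv_i^T,v_jv_j^T\rangle=(v_i^Tv_j)^2$ vanish off the diagonal, so
$$\langle X,A\rangle=\sum_{i,j}a_i\lambda_j(v_i^Tv_j)^2=\sum_{i=1}^N a_i\lambda_i.$$
This supplies the main term of the claimed inequality.

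The work is in bounding the second term. Using $L=\frac{1}{2}(ee^T-ff^T)$, I would write
$$\langle X,L\rangle=\frac{1}{2}\sum_{i=1}^N a_i\bigl[(e^Tv_i)^2-(f^Tv_i)^2\bigr].$$
Here the bipartite symmetry becomes essential: the identity $v_i(j)=f(j)\cdot v_{N+1-i}(j)$ gives
$$e^Tv_i=\sum_j v_i(j)=\sum_j f(j)v_{N+1-i}(j)=f^Tv_{N+1-i},$$
and hence $(e^Tv_i)^2=(f^Tv_{N+1-i})^2$. Reindexing the first sum with $i\mapsto N+1-i$ and combining yields
$$2\langle X,L\rangle=\sum_{i=1}^N (a_{N+1-i}-a_i)(f^Tv_i)^2.$$
From this and Parseval's identity $\sum_i(f^Tv_i)^2=\|f\|_2^2=N$, one immediately gets
$$|\langle X,L\rangle|\leq\frac{1}{2}\max_i|a_{N+1-i}-a_i|\cdot N\leq\frac{N}{2}\max_i(a_i+a_{N+1-i}),$$
where the last step uses that all $a_i\geq 0$. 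Since $p\geq 0$, subtracting $p\langle X,L\rangle$ costs at most $\frac{pN}{2}\max_i(a_i+a_{N+1-i})$, which combined with the expression for $\langle X,A\rangle$ gives the lemma.

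The only non-routine ingredient is the bipartite eigenvector symmetry, which is what reduces a potentially large $\langle X,L\rangle$ (driven by arbitrary $a_i$'s applied to top eigenvectors) into an expression controlled by the pairwise differences $|a_i-a_{N+1-i}|$. Without this cancellation one could not hope for a bound of order $N\cdot\max_i(a_i+a_{N+1-i})$; for instance, if $a_i$ had no relation to $a_{N+1-i}$, the sum could be as large as $(\sum_i a_i)\cdot\max_j(e^Tv_j)^2$, which is much worse. This is the step I would double-check most carefully, but it follows cleanly from the explicit description of bipartite eigenvectors quoted from \cite{lovasz}.
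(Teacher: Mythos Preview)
Your proof is correct and follows essentially the same route as the paper: compute $\langle X,A\rangle$ exactly via orthonormality, then use the bipartite eigenvector symmetry $e^Tv_i=f^Tv_{N+1-i}$ to rewrite $\langle X,L\rangle$ and bound it by Parseval. The only cosmetic difference is that you express the result in terms of $(f^Tv_i)^2$ and $\|f\|_2^2=N$, whereas the paper uses $(e^Tv_i)^2$ and $\|e\|_2^2=N$; your version also makes the cancellation to the difference $a_{N+1-i}-a_i$ explicit, which is slightly sharper but immediately relaxes to the stated bound.
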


\begin{proof}
Write $A=\sum_{i=1}^{N}\lambda_i v_i\cdot v_i^{T}$, and observe the identity $\langle v\cdot v^{T},w\cdot w^{T}\rangle=\langle v,w\rangle^2$. Then
$$\langle X,A\rangle=\sum_{i=1}^{N}\sum_{j=1}^{N}a_i\lambda_j \langle v_i,v_j\rangle^2=\sum_{i=1}^{N}\lambda_ia_i.$$
One the other hand,
$$\langle X,L\rangle =\frac{1}{2}\sum_{i=1}^{N}a_i(\langle v_i,e\rangle^2+\langle v_i,f\rangle^2)=\frac{1}{2}\sum_{i=1}^{N}(a_i+a_{N+1-i})\langle v_i,e\rangle^2\leq \frac{N}{2}\max_i (a_i+a_{N+1-i}).$$
Here, the second equality holds by noting that $\langle v_i,e\rangle=\langle v_{N+1-i},f\rangle$, and the last inequality as $\sum_{i=1}^{N}\langle v_i,e\rangle^2=||e||_2^2=N.$
\end{proof}

In what follows,  let us assume that $m=n$, so $N=2n$. Let  $d=d(M)=\frac{|M|}{n}=\frac{pN}{2}$, which is the average degree of the corresponding bipartite graph $G$, and let $\Delta=\Delta(M)$ denote the maximum number of 1 entries in a row or a column, that is, the maximum degree of $G$.

\begin{lemma}\label{lemma:cubesum}
$\pdisc(M)\geq \frac{1}{\Delta}\sum_{i=2}^{n} \lambda_i^3.$
\end{lemma}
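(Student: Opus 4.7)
The plan is to exhibit an explicit symmetric PSD matrix $X$ with diagonal entries at most $1$ whose discrepancy realizes the claimed bound, and then invoke the definition of $\pdisc(M)$. Since the eigenvectors $v_i$ of $A$ form an orthonormal basis, any choice of nonnegative coefficients $a_i\geq 0$ produces $X=\sum_{i=1}^N a_i v_i v_i^T$ that is automatically symmetric and positive semidefinite, and Lemma~\ref{lemma:discX} gives
\[
\disc(X)\geq \sum_{i=1}^N a_i\lambda_i-d\cdot \max_i(a_i+a_{N+1-i}),
\]
using $pN/2=d$. So the whole game is to pick the $a_i$'s so that the main term recovers $\sum_{i=2}^n \lambda_i^3/\Delta$, the diagonal constraint $X_{j,j}\leq 1$ holds, and the correction term does not overwhelm us.

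My choice will be $a_i=\lambda_i^2/\Delta$ for $i\in [1,n]$ and $a_i=0$ for $i\in [n+1,N]$. The diagonal constraint follows from the identity $(A^2)_{j,j}=\sum_{i=1}^N \lambda_i^2 v_i(j)^2$ together with the fact that $A$ is a $0/1$ adjacency matrix, so $(A^2)_{j,j}=\deg_G(j)\leq \Delta$; thus
\[
X_{j,j}=\frac{1}{\Delta}\sum_{i=1}^n \lambda_i^2 v_i(j)^2\leq \frac{(A^2)_{j,j}}{\Delta}\leq 1.
\]
With this choice the main spectral contribution is $\sum a_i\lambda_i=\frac{1}{\Delta}\sum_{i=1}^n \lambda_i^3$, while the correction term simplifies nicely: for $i\in [1,n]$ we have $a_{N+1-i}=0$, so $a_i+a_{N+1-i}=\lambda_i^2/\Delta$, and the symmetric index pair $i\in [n+1,N]$ yields the same values; hence $\max_i(a_i+a_{N+1-i})=\lambda_1^2/\Delta$.

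To absorb the $-d\lambda_1^2/\Delta$ loss into the leading $i=1$ term I will use the Rayleigh-quotient lower bound $\lambda_1\geq \frac{e^T A e}{e^T e}=\frac{2|M|}{N}=d$, which gives $\lambda_1^3\geq d\lambda_1^2$. Combining everything,
\[
\pdisc(M)\geq \disc(X)\geq \frac{\lambda_1^3-d\lambda_1^2}{\Delta}+\frac{1}{\Delta}\sum_{i=2}^n \lambda_i^3\geq \frac{1}{\Delta}\sum_{i=2}^n \lambda_i^3,
\]
where the last inequality uses that each remaining $\lambda_i$ ($2\leq i\leq n$) is nonnegative, as stated before the lemma. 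I do not expect any real obstacle here; the one delicate move is setting $a_i=0$ on the negative half of the spectrum so that the penalty reduces to a single $\lambda_1^2$ term, which can then be paid for by the $\lambda_1^3$ contribution via $\lambda_1\geq d$.
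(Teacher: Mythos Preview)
Your proof is correct and follows essentially the same route as the paper: the same test matrix $X=\frac{1}{\Delta}\sum_{i=1}^n \lambda_i^2 v_iv_i^T$, the same diagonal bound via $(A^2)_{j,j}=\deg_G(j)\leq \Delta$, the same application of Lemma~\ref{lemma:discX}, and the same absorption of the $d\lambda_1^2$ penalty using $\lambda_1\geq d$. One tiny remark: the final inequality only needs $\lambda_1^3-d\lambda_1^2\geq 0$, not the nonnegativity of $\lambda_2,\dots,\lambda_n$, so your last clause is unnecessary (though harmless).
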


\begin{proof}
Let $X=\frac{1}{\Delta}\sum_{i=1}^{n}\lambda_i^2\cdot v_i\cdot v_i^{T}$, which is positive semidefinite. We also claim that $X_{i,i}\leq 1$ for every $i\in [N]$. Indeed, note that
$\frac{1}{\Delta}A^2-X=\frac{1}{\Delta}\sum_{i=n+1}^{N}\lambda_i^2\cdot v_i\cdot v_i^{T}$ is positive semidefinite. Therefore all diagonal entries of 
$\frac{1}{\Delta}A^2-X$ are non-negative. Moreover, the $i$-th diagonal entry of $A^2$ counts the number of 1 in the $i$-th row of $A$. Therefore every diagonal entry of $\frac{1}{\Delta}A^2$ is at most $1$. Thus, $X_{i,i}\leq 1$ for every $i\in [N]$, which implies that $\pdisc(M)\geq \disc(X)$. Then, by Lemma \ref{lemma:discX} (with $a_i=\lambda_i^2$ for $1 \leq i \leq n$ and $a_i=0$ for $n+1 \leq i \leq N$), we have
$$\pdisc(M)\geq \disc(X)\geq \frac{1}{\Delta}\left(\sum_{i=1}^{n}\lambda_i^3-\frac{pN}{2} \cdot \lambda_1^2\right)=
\frac{1}{\Delta}\left(\sum_{i=1}^{n}\lambda_i^3-d\lambda_1^2\right)\geq \frac{1}{\Delta}\sum_{i=2}^{n}\lambda_i^3.$$
In the last inequality, we used the well known result that the maximal eigenvalue $\lambda_1$ of a graph $G$ is always at least its average degree $d$, see e.g. \cite{lovasz}.
\end{proof}

Next, we show that if $M$ has low rank and not too large maximum degree, then it has large discrepancy.

\begin{lemma}\label{lemma:lowrank}
Let $\rank(M)\leq r$, $d\leq n/2$ and $\Delta\leq 1.1 d$. Then $\pdisc(M)\geq \frac{d^{1/2}n^{3/2}}{7\sqrt{r}}$.
\end{lemma}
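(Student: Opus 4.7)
The plan is to start from Lemma~\ref{lemma:cubesum}, which gives $\pdisc(M)\ge \frac{1}{\Delta}\sum_{i=2}^{n}\lambda_i^3$, and to lower bound the cube-sum by exploiting the rank constraint. Since $\rank(A)=2\rank(M)\le 2r$ and the spectrum of $A$ is symmetric about $0$, the matrix $A$ has at most $r$ positive eigenvalues; combined with $\lambda_1\ge\cdots\ge\lambda_n\ge 0$, this forces $\lambda_{r+1}=\cdots=\lambda_n=0$, so $\sum_{i=2}^{n}\lambda_i^3=\sum_{i=2}^{r}\lambda_i^3$, a sum of at most $r-1$ non-negative terms.

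Next I would pin down the spectral mass by two standard facts. First, each $1$ entry of $M$ contributes to two entries of $A$, so (using $m=n$) we have $||A||_F^2=\sum_{i=1}^{N}\lambda_i^2=2|M|=2dn$; the symmetry $\lambda_i=-\lambda_{N+1-i}$ then gives $\sum_{i=1}^n \lambda_i^2=dn$. Second, the largest eigenvalue of a graph is bounded by its maximum degree, so $\lambda_1\le \Delta\le 1.1\,d$. Using the hypothesis $d\le n/2$, one obtains $\lambda_1^2\le 1.21\,d^2\le 0.605\,dn$, and therefore
\begin{equation*}
\sum_{i=2}^{r}\lambda_i^2 \;\ge\; dn-\lambda_1^2 \;\ge\; 0.395\,dn.
\end{equation*}

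Finally I would apply the power-mean inequality to the $r-1$ non-negative reals $\lambda_2,\dots,\lambda_r$, namely $\sum_{i=2}^{r}\lambda_i^3 \ge \bigl(\sum_{i=2}^{r}\lambda_i^2\bigr)^{3/2}/\sqrt{r-1}$. Plugging this into Lemma~\ref{lemma:cubesum} together with $\Delta\le 1.1\,d$ yields
\begin{equation*}
\pdisc(M)\;\ge\; \frac{1}{1.1\,d}\cdot \frac{(0.395\,dn)^{3/2}}{\sqrt{r}} \;=\; \frac{(0.395)^{3/2}}{1.1}\cdot \frac{d^{1/2}n^{3/2}}{\sqrt{r}},
\end{equation*}
and since $(0.395)^{3/2}/1.1\approx 0.225>1/7$, the stated bound follows. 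There is no real obstacle here; the only conceptual input is that the rank constraint compresses the squared spectral mass $\sum_{i=1}^n\lambda_i^2=dn$ onto at most $r$ positive eigenvalues, and power-mean converts this concentration into a cube-sum lower bound of the correct order $\sqrt{(dn)^3/r}$. The hypotheses $d\le n/2$ and $\Delta\le 1.1\,d$ enter only to ensure that the single eigenvalue $\lambda_1$ does not swallow too much of the $\ell^2$-mass.
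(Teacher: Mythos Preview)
Your proof is correct and follows essentially the same route as the paper: both start from Lemma~\ref{lemma:cubesum}, use $\lambda_1\le\Delta$ and $\sum_{i=1}^n\lambda_i^2=dn$ to get $\sum_{i=2}^r\lambda_i^2=\Omega(dn)$, and then convert this via the power-mean (quadratic--cubic) inequality into the required cube-sum bound. The only cosmetic difference is that you keep the constant $0.395$ where the paper rounds down to $1/3$.
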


\begin{proof}
Note that $\sum_{i=1}^{2n}\lambda_i^2=\mbox{tr}(A)=2dn$, so $\sum_{i=2}^{n}\lambda_i^2\geq dn-\Delta^2\geq dn -1.21d^2\geq dn/3$. Here, we used that the maximum degree $\Delta$ of a graph is always at least the largest eigenvalue $\lambda_1$, see e.g. \cite{lovasz}. Moreover, as $\rank(M)\leq r$, we have $\lambda_{r+1}=\dots=\lambda_n=0$, so $\sum_{i=2}^{r}\lambda_i^2\geq dn/3$. Applying the inequality between the quadratic and cubic mean, we get
$$\left(\frac{\sum_{i=2}^{r}\lambda_i^3}{r-1}\right)^{1/3}\geq \left(\frac{\sum_{i=2}^{r}\lambda_i^2}{r-1}\right)^{1/2}\geq \frac{d^{1/2}n^{1/2}}{\sqrt{3(r-1)}}.$$
Hence, applying Lemma \ref{lemma:cubesum}, $\pdisc(M)\geq \frac{1}{1.1 d}\sum_{i=2}^{r}\lambda_i^3\geq \frac{d^{1/2}n^{3/2}}{7\sqrt{r}}$.
\end{proof}

We improve the previous result by showing that the maximum degree condition can be omitted.

\begin{lemma}\label{lemma:main}
Let $\rank(M)\leq r$, $d\leq n/2$. Then $\pdisc(M)\geq c\cdot \min\left\{dn,\frac{d^{1/2}n^{3/2}}{\sqrt{r}}\right\}$ for some $c>0$. 
\end{lemma}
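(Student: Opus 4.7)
The plan is to reduce to \Cref{lemma:lowrank} via a case analysis on $\Delta$, using throughout the following monotonicity: if $M' = M[R, C]$ satisfies $p(M') \geq p(M)$, then $\pdisc(M) = \Omega(\pdisc(M'))$. Indeed, for $(X_0, Y_0) \subseteq R \times C$ achieving $\disc^{+}(M')$, one has $\disc_{M}(X_0, Y_0) = \disc_{M'}(X_0, Y_0) + (p(M') - p(M))|X_0||Y_0| \geq \disc_{M'}(X_0, Y_0)$, so $\disc^{+}(M) \geq \disc^{+}(M')$, and the earlier claims equating $\pdisc$ with $\disc^+$ up to constants close the argument.

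The first case I would handle is $\Delta \leq \max\{1.1 d, \sqrt{dn/(2r)}\}$. If $\Delta \leq 1.1 d$, \Cref{lemma:lowrank} applies directly. Otherwise $\Delta \leq \sqrt{dn/(2r)}$, and the proof of \Cref{lemma:lowrank} adapts: since $\lambda_1 \leq \Delta$, we still have $\sum_{i=2}^{n} \lambda_i^2 \geq dn - \Delta^2 \geq dn/2$; the cube-square mean inequality on the at most $r - 1$ nonzero terms yields $\sum_{i=2}^{n} \lambda_i^3 \geq (dn/2)^{3/2}/\sqrt{r-1}$; and plugging into \Cref{lemma:cubesum} together with $\Delta\sqrt{r-1} \leq \sqrt{dn/2}$ gives $\pdisc(M) \geq c \cdot dn$ for some absolute $c > 0$, which dominates $\min\{dn, d^{1/2} n^{3/2}/\sqrt{r}\}$.

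The remaining case is $\Delta > \max\{1.1 d, \sqrt{dn/(2r)}\}$. Here I would find a submatrix $M' = M[R, C]$ with $p(M') \geq p(M)$ whose max row (resp.\ column) degree is at most an absolute constant times its average row (resp.\ column) degree. Tensoring $M'$ into a square matrix $M'^{\otimes}$ preserves these degree ratios, so a constant-relaxed version of \Cref{lemma:lowrank} applied to $M'^{\otimes}$, combined with the identity $\pdisc(M'^{\otimes}) = |R||C| \cdot \pdisc(M')$, gives $\pdisc(M') \geq c\sqrt{|M'| \cdot |R| \cdot |C|}/\sqrt{r}$. By monotonicity this bounds $\pdisc(M)$, and a short calculation shows the desired conclusion follows once $(|M'|/|M|)\cdot(|R||C|/n^2)$ is bounded below by an absolute constant.

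The submatrix $M'$ would be produced by an iterative degree-pruning procedure that removes, one at a time, a row or column whose degree exceeds a fixed multiple of the current row- or column-average. The hard part will be to execute this pruning without incurring a $\log n$ factor, which a naive dyadic bucketing on the degree sequence would cost. I expect the clean bound to follow from an amortized analysis of the potential $(|M'|/|M|) \cdot (|R||C|/n^2)$, using that each pruning step either strictly decreases the degree ratios while preserving most of the edges, or sharply boosts the density enough to compensate for the area loss.
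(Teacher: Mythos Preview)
Your first case is correct and clean. The gap is in the second case. The pruning you describe --- removing a row or column whose degree exceeds a fixed multiple of the current average --- \emph{strictly decreases} the density at every step: if an $a\times b$ matrix has density $p$ and you delete a row of degree $>pb$, the new density is $(pab-\deg)/((a-1)b)<p$. Hence the output $M'=M[R,C]$ of this procedure satisfies $p(M')<p(M)$, not $p(M')\geq p(M)$, so the monotonicity inequality $\disc_{M}(X_0,Y_0)\geq \disc_{M'}(X_0,Y_0)$ you rely on fails (the correction term $(p(M')-p(M))|X_0||Y_0|$ has the wrong sign). The closing remark that a step might ``sharply boost the density'' is therefore inconsistent with the operation you specified. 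Independently, even granting a pruning with the right density behaviour, the crucial lower bound on the potential $(|M'|/|M|)\cdot(|R||C|/n^2)$ is precisely what you call ``the hard part'', and you have not supplied an argument --- only an expectation. Without it, the conclusion in the second case does not follow.

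The paper avoids these issues by a different, simpler dichotomy. Let $U_r$ (resp.\ $U_c$) be the rows (resp.\ columns) of degree at least $(1+\delta)d$, and let $t_r,t_c$ be the number of $1$-entries they meet. One has $\disc(U_r,[n])\geq \tfrac{\delta}{2}t_r$ directly, so if $t_r+t_c$ is at least a small constant times $D=\min\{dn, d^{1/2}n^{3/2}/(7\sqrt{r})\}$ we are done. Otherwise, \emph{zero out} all $1$-entries in $U_r\times[n]$ and $[n]\times U_c$ (rather than deleting rows or columns): the resulting $M'$ still has $\rank(M')\leq r$, satisfies $d'\geq 0.99d$ and $\Delta(M')\leq 1.1d'$, so \Cref{lemma:lowrank} applies, and since only $t_r+t_c$ entries changed, $|\disc_M(X)-\disc_{M'}(X)|$ is tiny for the optimal semidefinite $X$. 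This neither requires $p(M')\geq p(M)$ nor any control on $|R||C|/n^2$.
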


\begin{proof}
 Let $D=\min\left\{dn,\frac{d^{1/2}n^{3/2}}{7\sqrt{r}}\right\}$ and $\delta=0.01$. For $i\in [n]$, write $\deg_r(i)$ for the number of 1 entries in row $i$, and let $\deg_c(i)$ be the number of 1 entries in column $i$. Let $U_r=\{i\in [n]: \deg_r(i)\geq (1+\delta)d\}$, and similarly, let $U_c=\{i\in [n]: \deg_c(i)\geq (1+\delta)d\}$. Finally, write $t_r$ for the number of 1 entries in $M[U_r\times [n]]$, and define $t_c$ analogously.  We have 
$$\disc(U_r,[n])=t_r-p|U_r|n=\sum_{i\in U_r} (\deg_r(i)-d)\geq \sum_{i\in U_r} \frac{\delta}{2}\cdot \deg_r(i)=\frac{\delta}{2} t_r.$$ Similarly, $\disc([n],U_c)\geq \frac{\delta}{2}t_c$. Therefore, if $t_r+t_c\geq 0.01D$, then $\disc^{+}(M)\geq \frac{\delta}{2}\cdot \max\{t_r,t_c\} \geq \frac{1}{4} 10^{-4} D$. Hence, as $\pdisc(M)=\Theta(\disc^{+}(M))$, we may assume that $t_r+t_c< 0.01D$. 

Change all 1 entries of $M$ that are contained in either $U_r\times [n]$ or $[n]\times U_c$ into a 0 entry, and let $M'$ be the resulting matrix. Then 
$$|M'|\geq |M|-t_r-t_c\geq |M|-0.01D\geq |M|-0.01dn,$$
so $d'=d(M')\geq 0.99d$ and $d'\leq d$. Furthermore, $\Delta(M')\leq (1+\delta)d\leq 1.1d'$, and $\rank(M')\leq r$. Therefore, we can apply Lemma \ref{lemma:lowrank} to conclude that $\pdisc(M')\geq \frac{d'^{1/2}n^{3/2}}{7\sqrt{r}}\geq D/2$. Let $X$ be matrix such that $\disc_{M'}(X)=\pdisc(M')$, and let $A'$ be the symmetrization of $M'$. Then
\begin{align*}
|\disc_{M}(X)-\disc_{M'}(X)|&=|\langle X,A-A'\rangle-\frac{d-d'}{n}\langle X,L\rangle|\\
&\leq \langle J,A-A'\rangle + 2(d-d')n\leq 4(|M|-|M'|)\leq 0.04D.
\end{align*}
Hence, $\disc_{M}(X)\geq \disc_{M'}(X)-0.04D\geq D/2-0.04D\geq D/4$,  finishing the proof.
\end{proof}

Note that the previous lemma immediately implies Theorem \ref{thm:disc_main} by considering $M^{\otimes}$ instead of $M$, and using that $\pdisc(M)=\Theta(\disc(M))$. It will be convenient to rewrite Lemma \ref{lemma:main} into a slightly more convenient form.

\begin{corollary}\label{cor:main}
There exists $c>0$ such that the following holds. Assume that $\rank(M)\leq r$ and  $1/(8r)\leq p\leq 1/2$. Then there exists $X,Y\subset [n]$  such that $|X|=|Y|=n/2$, and 
$$p(M[X\times Y])\leq p-c\frac{p^{1/2}}{\sqrt{r}}.$$ 
\end{corollary}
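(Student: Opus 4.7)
\medskip

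\noindent\textbf{Proof plan.} The strategy is to feed Lemma~\ref{lemma:main} into the half-half reduction of Claim~\ref{claim:half}, via the equivalence $\pdisc(M)=\Theta(\disc^{-}(M))$.

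First I would normalize the bound of Lemma~\ref{lemma:main} in terms of $p$. Since $M$ is $n\times n$, we have $d=pn$, so the two quantities in the minimum become $dn=pn^2$ and $d^{1/2}n^{3/2}/\sqrt{r}=p^{1/2}n^2/\sqrt{r}$. Compare them by dividing: $dn \big/ \bigl(p^{1/2}n^2/\sqrt{r}\bigr)=p^{1/2}\sqrt{r}$. Under the hypothesis $p\geq 1/(8r)$ this ratio is at least $1/\sqrt{8}$, so
\[
\min\!\left\{dn,\; \frac{d^{1/2}n^{3/2}}{\sqrt{r}}\right\}\;\geq\;\frac{1}{\sqrt{8}}\cdot\frac{p^{1/2}n^2}{\sqrt{r}}.
\]
Combined with Lemma~\ref{lemma:main}, this gives $\pdisc(M)\geq c_1\, p^{1/2}n^2/\sqrt{r}$ for an absolute constant $c_1>0$.

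Next, the earlier chain of claims culminated in $\pdisc(M)=\Theta(\disc^{-}(M))$, so $\disc^{-}(M)\geq c_2\, p^{1/2}n^2/\sqrt{r}$. Now apply Claim~\ref{claim:half}: there exist $X\subset[n]$, $Y\subset[n]$ with $|X|=|Y|=n/2$ and
\[
\disc(X,Y)\;\leq\;-\frac{\disc^{-}(M)}{12}\;\leq\;-\frac{c_2}{12}\cdot\frac{p^{1/2}n^2}{\sqrt{r}}.
\]
Unfolding the definition $\disc(X,Y)=\bigl(p(M[X\times Y])-p\bigr)|X||Y|=\bigl(p(M[X\times Y])-p\bigr)\cdot n^2/4$ and dividing by $n^2/4$ yields $p(M[X\times Y])\leq p - c\, p^{1/2}/\sqrt{r}$ with $c=c_2/3$, which is the desired conclusion.

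There is essentially no hard step here: the one thing that could go wrong is the case analysis in the first paragraph, namely verifying that the minimum in Lemma~\ref{lemma:main} is always of order $p^{1/2}n^2/\sqrt{r}$ (not the other branch) throughout the range $1/(8r)\leq p\leq 1/2$. The lower bound $p\geq 1/(8r)$ on the density is exactly what makes the $dn$ branch at least a constant fraction of the $p^{1/2}n^2/\sqrt{r}$ branch; without it, when $p$ is much smaller than $1/r$, the minimum becomes $pn^2$ and one cannot extract a density drop of the stated form.
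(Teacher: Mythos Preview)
Your proof is correct and follows essentially the same route as the paper: translate Lemma~\ref{lemma:main} into a lower bound $\pdisc(M)=\Omega(p^{1/2}n^2/\sqrt{r})$ using $p\geq 1/(8r)$, pass to $\disc^{-}(M)$ via the $\Theta$-equivalence, apply Claim~\ref{claim:half} to get half-sized $X,Y$ with large negative discrepancy, and divide by $|X||Y|=n^2/4$. Your explicit verification that the minimum in Lemma~\ref{lemma:main} is $\Omega(p^{1/2}n^2/\sqrt{r})$ via the ratio $p^{1/2}\sqrt{r}\geq 1/\sqrt{8}$ is a bit more detailed than the paper's one-line remark, but the argument is identical.
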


\begin{proof}
Note that $p=d/n$, so for $1/(8r)\leq p$, we have $\min\{dn,\frac{d^{1/2}n^{3/2}}{\sqrt{r}}\}=\Omega(p^{1/2}n^2/\sqrt{r})$. Therefore, $\disc^{-}(M)=\Omega(\pdisc(M))=\Omega(p^{1/2}n^2/\sqrt{r})$ by Lemma \ref{lemma:main}. By Claim \ref{claim:half}, there exist $X\subset [n]$ and $Y\subset [n]$ such that $|X|=|Y|=n/2$ and $-\disc(X,Y)=\Omega(\disc^{-}(M))$. But then if $M'=M[X\times Y]$, we have $$|M'|=p|X||Y|+\disc(X,Y)\leq p|X||Y|-\Omega(p^{1/2}n^2/\sqrt{r}),$$
which implies
$$p(M')=\frac{|M'|}{|X||Y|}\leq p-\Omega\left(\frac{p^{1/2}}{\sqrt{r}}\right).$$
\end{proof}

\section{Log-rank conjecture}

Let $z(M)$ denote the largest $z$ for which there exist $X\subset [n]$, $Y\subset [n]$, such that $|X|=|Y|=z$ and $M[X\times Y]$ has only 0 entries. Based on a lemma of Gavinsky and Lovett \cite{GL}, we show that sufficiently sparse low-rank matrices $M$ satisfy $z(M)=\Omega(n)$.

\begin{lemma}\label{lemma:toosparse}
Let $M\in\{0,1\}^{n\times n}$ such that $\rank(M)\leq r$ and $p(M)\leq \frac{1}{8r}$. Then $z(M)\geq n/4$.
\end{lemma}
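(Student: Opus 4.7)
\noindent The plan is to apply a structural lemma of Gavinsky and Lovett~\cite{GL} for sparse low-rank binary matrices. The basic intuition I would use is as follows: since $\rank(M)\le r$, I can pick a basis $R\subseteq[n]$ of the row space of $M$ with $|R|\le r$, and then every row of $M$ is a linear combination of the basis rows. Consequently any column $j$ satisfying $M_{i,j}=0$ for all $i\in R$ is itself identically zero in $M$. Writing $C_0$ for the set of all-zero columns of $M$, I would bound $|C_0|\ge n-|M|\ge n-n^2/(8r)$, which already gives $|C_0|\ge n/4$ when $n\le 6r$; in this regime the lemma follows immediately by taking $Y\subseteq C_0$ of size $n/4$ and $X$ any $n/4$-subset of $[n]$.

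\medskip
\noindent In the complementary regime $n>6r$ this basis argument is too weak, and here I would invoke the GL lemma. The extra structure GL exploits is that $M$, having rank at most $r$, has at most $2^r$ distinct rows and at most $2^r$ distinct columns, and hence decomposes into constant blocks governed by a binary ``quotient matrix'' $B$ of rank at most $r$ and size at most $2^r\times 2^r$. The rows and columns of $B$ carry weights $m_\tau,n_\sigma$ summing to $n$ on each side, and the sparsity assumption $p\le 1/(8r)$ translates into $\sum_{\tau,\sigma}B_{\tau\sigma}m_\tau n_\sigma\le n^2/(8r)$. The Gavinsky--Lovett argument then produces a weighted all-zero rectangle $T\times S$ in $B$ with $\sum_{\tau\in T}m_\tau\ge n/4$ and $\sum_{\sigma\in S}n_\sigma\ge n/4$; pulling this back to $M$ via the partition into row/column classes gives the desired $X\times Y$ with $M[X\times Y]=0$ and $|X|,|Y|\ge n/4$.

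\medskip
\noindent The principal obstacle is precisely the hard regime $n>6r$. A naive ``minimum-weight basis'' argument does not suffice there: even when $M$ is sparse overall, the rows needed to span its row space may all be heavy, so their combined support can cover essentially all of $[n]$. Similarly, iterating the discrepancy bound (Lemma~\ref{lemma:main}) would halve the dimensions at each step and therefore cannot yield a rectangle of linear size. It is precisely GL's use of the quotient block structure (rather than any specific basis) that circumvents this obstruction and allows the linear bound $n/4$ to be achieved.
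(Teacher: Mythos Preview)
Your proposal does not contain the key idea. The first paragraph (the case $n\le 6r$) is correct but essentially trivial: the inequality $|C_0|\ge n-|M|$ is just the pigeonhole statement that at most $|M|$ columns can contain a $1$, and the basis discussion plays no role. The entire content of the lemma lies in the regime $n>6r$, and there you only say ``invoke the GL lemma'' and sketch a reformulation via the quotient matrix $B$. But passing to $B$ with weights $m_\tau,n_\sigma$ does not reduce the difficulty: you are left with exactly the same problem (find a large weighted all-zero rectangle in a sparse low-rank binary matrix), and you give no mechanism for solving it. Saying ``the Gavinsky--Lovett argument then produces\ldots'' is not a proof.

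The actual argument, which the paper carries out directly on $M$ without any quotient, is this: first discard all rows and columns with more than $n/(4r)$ ones; by the density bound $p\le 1/(8r)$ and Markov's inequality, at most $n/2$ rows and $n/2$ columns are discarded, leaving $X'\times Y'$ with $|X'|,|Y'|\ge n/2$. Now suppose for contradiction that $M[X'\times Y']$ contains no $(n/4)\times(n/4)$ all-zero block. Greedily build a permutation submatrix: given a $k\times k$ permutation submatrix on rows $A\subset X'$ and columns $B\subset Y'$ with $k\le r$, the rows of $X'$ hitting a $1$ in some column of $B$ number at most $k\cdot n/(4r)\le n/4$, and likewise for columns; the remaining $(n/4)\times(n/4)$ block must contain a $1$, which extends the permutation matrix. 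After $r+1$ steps you have an $(r+1)\times(r+1)$ permutation submatrix, contradicting $\rank(M)\le r$. This degree-trimming plus greedy-extension step is precisely what is missing from your sketch.
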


\begin{proof}
Let $X,Y\subset [n]$ be the set of rows and columns containing more than $n/(4r)$ entries equal to 1, respectively. As $p(M)\leq \frac{1}{8r}$, we have $|X|,|Y|\leq n/2$. Let $X'=[n]\setminus X$ and $Y'=[n]\setminus Y$. Assume that $z(M[X'\times Y'])< n/4$, then we find a permutation matrix of size $r+1$ in $M[X'\times Y']$, contradicting that $\rank(M)\leq r$.

For $k=1,\dots,r+1$, we find a $k\times k$ sized permutation matrix greedily. Suppose that we have already found $A\subset X'$ and $B\subset Y'$ such that $|A|=|B|=k$, and $M[A\times B]$ is a permutation matrix. Let $X_0\subset X'$ be the set of all rows that intersect a column in $B$ in a 1 entry, and let $Y_0\subset Y'$ be the set of all columns that intersect a row in $A$ in a 1 entry. Then $|X_0|,|Y_0|\leq kn/(4r)\leq n/4$, so $|X'\setminus X_0|\geq n/4$ and $|Y'\setminus Y_0|\geq n/4$. Since we assumed that $z(M[X'\times Y'])< n/4$, there exists $i\in X'\setminus X_0$ and $j\in Y'\setminus Y_0$ such that $M_{i,j}=1$, which means that $(A\cup\{i\})\times (B\cup\{j\})$ induces  a $(k+1)\times (k+1)$ sized permutation matrix in $M$.
\end{proof}

\begin{theorem}
There exists a constant $c>0$ such that the following holds. Let $M\in \{0,1\}^{n\times n}$ such that $\rank(M)\leq r$ and $p(M)\leq 1/2$. Then $z(M)\geq n/2^{c\sqrt{r}}$.
\end{theorem}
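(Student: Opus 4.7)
The strategy is to iterate Corollary~\ref{cor:main} to drive the density below the threshold $1/(8r)$, and then invoke Lemma~\ref{lemma:toosparse} on the (now sparse) leftover submatrix to extract a large all-zero block. Concretely, I set $M_0 = M$, $n_0 = n$, $p_0 = p(M)$, and construct a nested sequence of square submatrices $M_k \in \{0,1\}^{n_k \times n_k}$ of $M$ with densities $p_k = p(M_k)$. While $p_k \geq 1/(8r)$ (the upper bound $p_k \leq 1/2$ is automatic since the densities are nonincreasing along the construction), Corollary~\ref{cor:main} applied to $M_k$ produces $X_k, Y_k \subset [n_k]$ with $|X_k| = |Y_k| = n_k/2$ and $p_{k+1} := p(M_k[X_k \times Y_k]) \leq p_k - cp_k^{1/2}/\sqrt{r}$. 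Set $M_{k+1} = M_k[X_k \times Y_k]$ and $n_{k+1} = n_k/2$; the rank bound $\rank(M_{k+1}) \leq r$ is automatically preserved, since taking submatrices cannot increase rank.

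Let $K$ be the first index for which $p_K < 1/(8r)$. Then $M_K$ is an $(n/2^K) \times (n/2^K)$ submatrix of $M$ with $\rank(M_K) \leq r$ and $p(M_K) \leq 1/(8r)$, so Lemma~\ref{lemma:toosparse} applied to $M_K$ yields an all-$0$ submatrix of side length $n_K/4 = n/2^{K+2}$. Hence $z(M) \geq n/2^{K+2}$, and the entire argument reduces to establishing the bound $K = O(\sqrt{r})$.

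To control the recursion $p_{k+1} \leq p_k - cp_k^{1/2}/\sqrt{r}$, I make the substitution $q_k = \sqrt{p_k}$, which turns it into $q_{k+1}^2 \leq q_k^2 - cq_k/\sqrt{r}$. Using the elementary inequality $\sqrt{1-x} \leq 1 - x/2$ for $x \in [0,1]$, this gives the much cleaner linear decrement $q_{k+1} \leq q_k - c/(2\sqrt{r})$. Since $q_0 \leq 1/\sqrt{2}$ and the iteration terminates once $q_K < 1/\sqrt{8r}$, telescoping yields $K \leq 2\sqrt{r}\bigl(1/\sqrt{2} - 1/\sqrt{8r}\bigr)/c = O(\sqrt{r})$, as required. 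I expect the only real subtlety to be verifying that this quadratic-to-linear passage is both valid and tight enough to preserve the $\sqrt{r}$ exponent in the exponent of $2$; everything else is routine bookkeeping (choosing constants, verifying that $p_k \in [1/(8r),1/2]$ throughout so that Corollary~\ref{cor:main} applies at every step, and combining the exponents $K+2$ into a single constant $c' = O(1)$ in the final bound $z(M) \geq n/2^{c'\sqrt{r}}$).
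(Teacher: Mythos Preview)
Your proposal is correct and follows exactly the same density-decrement strategy as the paper: iterate Corollary~\ref{cor:main} until the density drops below $1/(8r)$, bound the number of iterations by $O(\sqrt r)$, then apply Lemma~\ref{lemma:toosparse} to the resulting sparse submatrix. Your substitution $q_k=\sqrt{p_k}$ to linearize the recursion into $q_{k+1}\le q_k-c/(2\sqrt r)$ is a clean alternative to the paper's dyadic argument (which counts how many steps the density can spend in each interval $[2^{-k-1},2^{-k}]$ and sums the resulting geometric series), but both yield the same $O(\sqrt r)$ bound on the iteration depth.
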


\begin{proof}
We will proceed by a density decrement argument. Let $n_0=n$ and $M_0=M$, and let $c_0$ be the constant given by Corollary \ref{cor:main}. We define a sequence $M_0,M_1,\dots$ of submatrices of $M$ with decreasing density. If $M_{i}\in \{0,1\}^{n_i\times n_i}$ is already defined for $i\geq 0$ with $p_i:=p(M_i)\leq p(M)$, and we have $p_i\geq 1/(8r)$, we define $M_{i+1}$ by setting $M_{i+1}=M_i[X\cup Y]$, where $X,Y\subset [n_i]$,  $|X|=|Y|=n_i/2$, and 
$$p(M_i[X\times Y])\leq p_i-c_0\frac{p_i^{1/2}}{\sqrt{r}}.$$
Such $X$ and $Y$ exist by Corollary \ref{cor:main}. On the other hand, if $p(M_i)<1/(8r)$, then set $I=i$ and stop. 

Observe that $n_i=n/2^i$ and $p_{i+1}\leq p_i-c_0\frac{p_i^{1/2}}{\sqrt{r}}$ for $i=0,\dots,I-1$. This implies that if $C$ is a sufficiently large constant, then for every $x\in [1/(8r),1/2]$, the number of indices $i$ such that $x\leq p_i\leq 2x$ is at most $C\sqrt{r}x^{1/2}$. Applying this for every $x=2^{-k}$, we get that
$$I\leq C\sqrt{r}\sum_{i=1}^{\infty}2^{-i/2}\leq 10C\sqrt{r}.$$ 

As $p(M_I)\leq 1/(8r)$ and $n_I\geq n/2^{I}\geq n/2^{10C\sqrt{r}}$, we have $z(M_I)\geq n_{I}/2=n/2^{c\sqrt{r}}$  by Lemma \ref{lemma:toosparse} with a suitable constant $c>0$, finishing the proof.
\end{proof}

From this, the proof of Theorem \ref{thm:logrank} is immediate.

\begin{proof}[Proof of Theorem \ref{thm:logrank}]
Let $M\in \{0,1\}^{m\times n}$ such that $\rank(M)\leq r$. Without loss of generality, we may assume that $p(M)\leq 1/2$, otherwise we can consider $J-M$. Note that $\rank(J-M)\leq r+1$. Writing $n'=mn$,  $M^{\otimes}\in \{0,1\}^{n'\times n'}$ satisfies that $p(M^{\otimes})\leq 1/2$. Hence, by the previous theorem, $z(M^{\otimes})\geq n'/2^{c\sqrt{r}}$ for some constant $c>0$. Note that each row of $M$ is repeated $n$ times in $M^{\otimes}$, and each column is at most $m$ times, so we can find $X\subset [m]$ and $Y\subset [n]$ such that $|X|\geq z(M^{\otimes})/n$, $|Y|\geq z(M^{\otimes})/m$, and $M[X\times Y]$ has only 0 entries. But then $|X|\geq m/2^{c\sqrt{r}}$ and $|Y|\geq n/2^{c\sqrt{r}}$, finishing the proof.
\end{proof}

\end{document}